\newcounter{q3}
\newcommand{\dsm}[3]{
{\if#20{\if#31{\frac{\partial #1}{\partial y}}\else
          {\frac{\partial^{#3} #1}{\partial y^{#3}}}
        \fi}\else
  {\if#30{\if#21{\frac{\partial #1}{\partial x}}\else
            {\frac{\partial^{#2} #1}{\partial x^{#2}}}
          \fi}\else
    {\setcounter{q3}{#2}\addtocounter{q3}{#3}
    \frac{\partial{\if{1}\arabic{q3}^{}\else^{ \arabic{q3} }\fi}#1}
    {{\if#20\else{\partial x{\if#21\else{^{#2}}\fi}}\fi}
     {\if#30\else{\partial y\if#31\else{^{#3}}\fi}\fi} }}
   \fi}
\fi} }
\newcommand{\dzm}[2]{
{\if#10{\if#21{\frac{\partial}{\partial\overline{\zeta}}}\else
          {\frac{\partial^{#2}}{\partial\overline{\zeta}^{#2}}}
        \fi}\else
  {\if#20{\if#11{\frac{\partial}{\partial\zeta}}\else
            {\frac{\partial^{#1}}{\partial\zeta^{#1}}}
          \fi}\else
    {\setcounter{q3}{#1}\addtocounter{q3}{#2}
    \frac{\partial{\if{1}\arabic{q3}^{}\else^{ \arabic{q3} }\fi}}
    {{\if#10\else{\partial\zeta{\if#21\else{^{#1}}\fi}}\fi}
     {\if#20\else{\partial\overline{\zeta}\if#21\else{^{#2}}\fi}\fi} }}
   \fi}
\fi} }
\newcounter{q2}
\newtheoremstyle{theor}
  {\medskipamount}
  {\medskipamount}
  {\itshape}
  {}
  {\bfseries}
  {.}
  {.5em}
  {}
\newtheorem{definition}{Definition}[section]
\newtheorem{theorem}[definition]{Теорема}
\newtheorem{lemma}[definition]{Лемма}
\newtheorem{proposition}[definition]{Предложение}
\newtheorem{corollary}[definition]{Следствие}
\theoremstyle{definition}
\newtheorem{remark}[definition]{Замечание}
\numberwithin{equation}{section}
\newtheoremstyle{remarks}
  {0mm}
  {0mm}
  {\itshape}
  {}
  {\itshape}
  {.}
  {.5em}
  {}
\makeatletter \@addtoreset{equation}{section} \makeatother
\begin{document}

\subsection*{\center ОБОБЩЕННЫЕ КВАЗИИЗОМЕТРИИ НА ГЛАДКИХ РИМАНОВЫХ МНОГООБРАЗИЯХ}\begin{center}\textbf{Е.С.~Афанасьева} \end{center}
\parshape=5
1cm 13.5cm 1cm 13.5cm 1cm 13.5cm 1cm 13.5cm 1cm 13.5cm \noindent \small {\bf Аннотация.} В работе изучается граничное поведение конечно билипши\-цевых ото\-бра\-жений
 на гладких римановых многообразиях.

 \parshape=2
1cm 13.5cm 1cm 13.5cm  \noindent \small {\bf Ключевые слова:} гладкие римановы многообразия, конечно билипшицевы отобра\-жения, модули, нижние $Q$-гомеоморфизмы.

\bigskip
\textbf{ AMS 2010 Subject Classification: 30L10, 30C65}


\large\section {Введение} Напомним некоторые определения из теории
римановых многообразий, которые можно найти, например, в монографиях
\cite{K},   \cite{PSh}, \cite{L}  и \cite{R}. На\-помним, что {\it
n-мерное топологическое многообразие $\mathbb{M}^{n}$} -- это
хаусдорфово тополо\-гическое пространство со счетной базой, в котором
каждая точка имеет открытую окрестность, гомеоморфную
$\mathbb{R}^{n}$. {\it Картой на многообразии $\mathbb{M}^{n}$}
называется пара $(U,\varphi)$, где $U$ -- открытое подмножество
пространства $\mathbb{M}^{n}$, а $\varphi$ -- гомеоморфное
отображение подмножества $U$ на открытое под\-множество координатного
пространства $\mathbb{R}^n$, с помощью которого каждой точке $p \in
U$ ставится во взаимно однозначное соответствие набор из $n$ чисел,
ее {\it локальных координат}. Полный набор всех карт многообразия
называется его {\it атласом}. {\it  Гладкое многообразие} --
многообразие с картами $(U_{\alpha},\varphi_{\alpha})$, локальные
координаты которых связаны гладким ($C^{\infty}$) образом.

{\it Римановым многообразием} $(\mathbb{M}^n,g)$ называется гладкое
многообразие с заданной на нем римановой метрикой, т.е.
  положительно определенным симметричным тензорным полем $g\ =\ g_{ij}(x),$
  которое определяется  в координатных картах с правилом перехода:
$$
'g_{ij}(x)=g_{kl}(y(x))\ \frac{\partial y^{k}}{\partial
x^{i}}\frac{\partial y^{l}}{\partial x^{j}}\ \ ,
$$ где, как обычно, $k$ и $l=1,...,n$ -- так называемые связанные индексы, по
которым производится суммирование.
$g_{ij}(x)$ в дальнейшем подразумевает\-ся гладким. Заметим, что $ det
g_{ij}>0$ в силу положительной определенности $g_{ij}$, см.,  напр.,
\cite[c. 277]{Ga}. 

 {\it Элемент длины} задается инвариантной дифференциальной формой
$$ds^{2}={g_{ij}dx^{i}dx^{j}}:=\sum_{i,j=1}^{n}g_{ij}dx^{i}dx^{j},$$
  где $g_{ij}$ -- метрический тензор,  $x^{i}$ -- локальные координаты.
В соответствии с этим, если $\gamma:[a,b]\rightarrow \mathbb{M}^n$
-- кусочно-гладкая кривая и $x(t)$ -- ее параметри\-ческое задание в
локальных координатах, то ее длина вычисляется по формуле:
$$
s_\gamma = \int\limits_{a}^{b}\sqrt{g_{ij}(x(t))\
\frac{dx^i}{dt}\frac{dx^j}{dt}}\ dt
$$ {\it Геодезическое расстояние $d(p_1,p_2)$} определяется как
инфимум длин кусо\-чно-глад\-ких кривых, соединяющих точки $p_1$ и $p_2$
в $(\mathbb{M}^n,g)$. Любая кривая, соединяющая  $p_1$ и $p_2$, на
которой реализуется этот инфимум, называется {\it геодезической}.

 Напомним также, что {\it элемент объема} на $(\mathbb{M}^n,g)$  определяется инва\-риант\-ной
  формой $ dV =  \sqrt{det{g_{ij}}} \ dx^{1}...dx^{n},$
а {\it элемент площади} гладкой поверхности $H$ на
$(\mathbb{M}^n,g)$   $d{\cal{A}}=\sqrt{det \
{g_{\alpha\beta}^{*}}} \ du_{{1}}...du_{{n-1}}$ -- инвариантной фор\-мой, где
$g_{\alpha\beta}^{*}$ -- риманова метрика на $H$, порожденная
исходной  римановой метрикой $g_{ij}$ по формуле:

$$ g_{\alpha\beta}^{*}(u)=g_{ij}(x(u))\cdot\frac{\partial x^{i}}
 {\partial u^{\alpha}}\cdot\frac{\partial x^{j}}{\partial u^{\beta}}.
$$ Здесь $x(u)$ -- гладкая
 параметризация поверхности $H$ с $\nabla_{u}x\neq 0$ всюду.
Таким образом, метрический тензор $g$ на римановом многообразии
по\-рождает соответствующий метрический тензор $g^{*}$ на
произвольной регу\-ляр\-ной поверхности, см., напр., $\S$ 88 в \cite{R}.

Здесь под {\it поверхностью} на многообразии $(\mathbb{M}^n,g)$
понимается непре\-рывное отображение $H:U\rightarrow \mathbb{M}^n,$
где $U$ -- область в $(n-1)$-мерном пространстве $\mathbb{R}^{n-1}$
или, более общо, $U$ -- $(n-1)$-мерное многообразие, например,
$(n-1)$-мерная сфера. Если отображение $H$ является гладким в
локальных координатах, то поверхность называют {\it гладкой}.
Например, геодезическая сфера в достаточно малой окрестности
произвольной точки гладкого риманова
многообразия -- гладкая поверхность, см. монографию \cite[с. 106]{L}.

Для нас важны следующие фундаментальные факты, см., напр., лемму
5.10 и следствие 6.11 в \cite{L}, а также \cite[с. 260 - 261]{K}.
\begin{proposition}\label{prop1} {\it Для каждой  точки  риманова  многообразия
существу\-ют  ее окрестности и соответствующие локальные координаты в
них, в которых геодезическим сферам с центром в данной точке
соответствуют евклидовы сферы с теми же радиусами и с центром в
начале координат, а связке геодезических, исходящих из данной точки,
соответствует связка лучей, исходящих из начала координат.}\end{proposition}

 Указанные окрестности и координаты принято называть {\it нормальными}.

\begin{remark} \label{rem1} В частности, в нормальных координатах геодезические сферы имеют
естественную гладкую параметризацию через направляющие косинусы
соответствующих лучей, исходящих из начала координат. Кроме того,
метрический тензор в начале координат в этих координатах совпадает с
единичной матрицей, см., напр., предложение 5.11 в \cite{L}.\end{remark}


\section {О нижних $Q$-гомеоморфизмах на гладких римановых
многообразиях} В дальнейшем мы используем обозначения геодезических
сфер $S(P_0, \varepsilon)=\{P\in \mathbb{M}^n: d(P,P_0)=
\varepsilon\}$, геодезических шаров $B(P_0,\varepsilon)=\{P\in
\mathbb{M}^n: d(P,P_0)< \varepsilon\}$ и геодезических колец
$\mathbb{A}(P_0, \varepsilon,\varepsilon_0)=\{P \in \mathbb{M}^n:
\varepsilon<d(P,P_0)<\varepsilon_0\}$, где $d$ -- геодезическое
расстояние на $(\mathbb{M}^n,g)$ и подразумеваем, что
 $S(P_0, r)$, $B(P_0, r)$ и $\mathbb{A}=\mathbb{A}(P_0, \varepsilon,\varepsilon_0)$
 лежат в нормальной окрестности точки
$P_0$. Далее для любых множеств $A$, $B$  и $C$ в
$(\mathbb{M}^n,g)$, $n\geq2$, через $\triangle (A,B;C)$ обозначаем
семейство всех кривых $\gamma:[a,b]\rightarrow \mathbb{M}^n,$
соединяющих $A$ и $B$ в $C$, т.е. $\gamma(a)\,\in\,A\,,\gamma(b)
\,\in\,B$ и $\gamma(t)\,\in\,C\,$ при $a\,<\,t\,<\,b\,.$

Борелевскую функцию $\rho:\mathbb{M}^n\to [0,\infty]$ называем {\it
допустимой} для семейства $\Gamma$ поверхностей $S$ в
$\mathbb{M}^n$,  пишем $\rho\in adm\,\Gamma$, если
\begin{equation}\label{44.1.33}
 \int\limits_S\rho^{n-1}\,d{\cal A}\geq1\ \ \ \ \forall\  S \in
 \Gamma.
 \end{equation}
{\it $p$-модуль семейства  поверхностей  $\Gamma$} при $p \in (0,\infty)$ есть
величина
$$
M_p(\Gamma):= \inf\limits_{\rho\ \in\  adm \ \Gamma} \int
\limits_{\mathbb{M}^n}\rho^p\  dV.$$ При $p=n$ получаем конформный модуль и в этом случае используем обозначение $M(\Gamma)$.

Аналогично статье \cite{KR1}, см. также монографию \cite{MRSY}, борелеву функцию
$\rho:\mathbb{M}^n\to[0,\infty]$ называем {\it обобщенно допустимой}
 для семейства $\Gamma$ поверх\-ностей $S$ в
$\mathbb{M}^n$ относительно $p$-модуля, пишем $\rho\in ext_p\,adm\,\Gamma$, если условие
допустимости (\ref{44.1.33}) выполнено для $p$-почти всех ($p$-п.в.)
$S\in\Gamma$, т.е. за исключением подсемейства $\Gamma$ нулевого $p$-модуля. При $p=n$ пишем  $\rho\in ext\,adm\,\Gamma.$

Следующее понятие, мотивированное кольцевым определением Геринга для
квазиконформных отображений в \cite{G}, было впервые введено в
$\mathbb{R}^{n}$, $n\geq2,$ в \cite{KR1}.

 Пусть всюду далее $D$ и $D_*$ -- области на гладких римановых многообрази\-ях
 $(\mathbb{M}^n,g)$ и  $(\mathbb{M}^{n}_*,g^{*})$, $n\geq2,$ соответственно,
  $Q:\mathbb{M}^n \to(0,\infty)$ -- измеримая функция. Гомеоморфизм
$f:D\to D_*$ будем называть {\it нижним ${\it Q}$-гомео\-морфизмом
относительно p-модуля в точке $P_0\in \overline{D}$}, если существует $\delta_0
\in (0, d(P_0))$, $d_0:=\sup\limits_{P\in D}d(P_0,P),$ такое что для всякого
$\varepsilon_0<\delta_0$
 и любых геодезических колец
$\mathbb{A}=\mathbb{A}(P_0,\varepsilon,\varepsilon_0),\
\varepsilon\in(0,\varepsilon_0),$ выполнено условие
\begin{equation}\label{44.1.122} M_p(f(\Sigma_{\varepsilon}))\
\geq\ \inf\limits_{\rho\ \in\  ext_p\
adm\,\Sigma_{\varepsilon}}\int\limits_{\mathbb{A} \  \cap D}
\frac{\rho^p(P)}{Q(P)}\ dV,\end{equation}  где через
$\Sigma_{\varepsilon}$ обозначено семейство пересечений всех геодезических сфер
$S(P_0,r), r\in(\varepsilon,\varepsilon_0),$ с областью $D$. Будем также говорить,
что гомеоморфизм $f:D\to D_*$ является {\it нижним
$Q$-гомео\-морфиз\-мом относительно p-модуля}, если $f$ является
нижним $Q$-гомео\-морфиз\-мом относительно p-модуля в каждой точке
$P_0 \in \overline{D}$, ср. с \cite{MRSY}.

Следующий критерий нижних $Q$-гомеоморфизмов, см. теорему 2.1 в \cite{KR1} и теорему  9.2 в \cite{MRSY}, впервые был доказан для конформного модуля
в $\mathbb{R}^{n}$ при $n\geq2$;  при $p\neq n$ в $\mathbb{R}^{n}$, см. теорему 9.2 в \cite{GS}, а также для гладких римановых многообразий
 $(\mathbb{M}^n,g)$ при $n\geq2$ относительно конформного модуля,
 см. теорему 4.1 в \cite{ARS}.

\begin{theorem} \label{th1} {\it Пусть $D$  и  $D_*$ -- области на гладких
римановых много\-образиях $(\mathbb{M}^n,g)$ и
$(\mathbb{M}^{n}_*,g^{*})$, $n\geq2$, соответственно,
  $Q:\mathbb{M}^n\to(0,\infty)$ -- измеримая функция, и $P_0\,\in\,\overline{D}$.
  Гомеоморфизм $f:D\to
D_*$ является нижним $Q$-гомеоморфизмом относительно p-модуля, $p>n-1$, в точке $P_0$,
тогда и только тогда, когда для любой нормальной
окрестности $B(P_0,\varepsilon_0)$ точки $P_0$ с
$0<\varepsilon_0<d_0:=\sup\limits_{P\in D}d(P_0,P)$
\begin{equation} \label{11.1} M_p(f(\Sigma_{\varepsilon}))\ \geq\
\int\limits_{\varepsilon}^{\varepsilon_0}
\frac{dr}{||\,Q||\,_{s}(P_0,r)}\,\quad\forall\
\varepsilon\in(0,\varepsilon_0),\end{equation} где $s=\frac{n-1}{p-n+1}$,
$\Sigma_{\varepsilon}$
 -- семейство всех пересечений с областью $D$
геодезических сфер $S(P_0,r)$, $r\in(\varepsilon,\varepsilon_0),$ и
\begin{equation} \label{11.10} ||\,Q||\,_{s}(P_0,r)=\left(\int\limits_{D(P_0,r)}Q^{s}(P)\
d{\cal A}\right)^{\frac{1}{s}}\end{equation} -- $L^{s}$-норма
$Q$ по $D(P_0,r)=\{P\in D: d(P,P_0)=r\}=D\cap S(P_0,r)$.}\end{theorem}

Заметим, что инфимум в (\ref{44.1.122}) достигается для функции
$$\rho_0(P)\ =\left[\frac{Q(P)}{||\,Q||_{s}(P_0, d(P,P_0))}\right]^{\frac{1}{p-n+1}}.$$

Таким образом, неравенство (\ref{11.1}) является точным для нижних $Q$-гомео\-мор\-физ\-мов относительно $p$-модуля.

\begin{proof} Отметим, что  в (\ref{44.1.122}) по теореме Лузина, предложению \ref{prop1} и замечанию \ref{rem1}
$$\inf\limits_{\rho \ \in   \ adm\
\Sigma_{\varepsilon}}\int\limits_{\mathbb{A}\ \cap D}\frac{\rho^p(P)}{Q(P)}\
dV\ =\inf\limits_{\rho \ \in  \ ext_p \ adm\
\Sigma_{\varepsilon}}\int\limits_{\mathbb{A}\ \cap D}\frac{\rho^p(P)}{Q(P)}\
dV.$$
Также для любого $\rho \in ext_p \ adm\
\Sigma_{\varepsilon},$
$$\widetilde{A}_\rho (r)\ :=
\int\limits_{D(P_0,r)} \rho^{n-1}(P)\ d{\cal{A}}\ \neq \ 0 \ \ \ \text{п.в.}$$
 является измеримой функцией по параметру $r$, скажем по теореме
Фубини, предложению \ref{prop1} и замечанию \ref{rem1}.  Таким
образом, мы можем требовать равенство $\widetilde{A}_\rho (r)= 1$
п.в. вместо условия допустимости (\ref{44.1.33}) и
$$\inf\limits_{\rho \ \in  \ ext_p \ adm\
\Sigma_{\varepsilon}}\int\limits_{\mathbb{A}\ \cap D}\frac{\rho^p(P)}{Q(P)}\
dV=
\int\limits_{\varepsilon}^{\varepsilon_0}\left(\inf\limits_{\widetilde{\alpha}\in\
I(r)} \int\limits_{D(P_0,r)}\frac{\widetilde{\alpha}^q(P)}{Q(P)}\
d{\cal{A}}\right)dr,$$ где $q=p/ (n-1)>1,$
$\mathbb{A}=\mathbb{A}(P_0,\varepsilon,\varepsilon_0)$, $\varepsilon\in
(0,\varepsilon_0),$ и $I(r)$ обозначает множество всех борелевых
функций $\widetilde{\alpha}$ на поверхности $D(x_0,r),$ таких, что
$$\int\limits_{D(P_0,r)}\widetilde{\alpha}(P)\ d{\cal{A}}=1.$$
Поэтому теорема 1 следует из леммы 2.1 в \cite{KR1}, см. также лемму 9.2 в \cite{MRSY} для
$X=D(P_0,r)$ с мерой площади на $D(P_0,r)$ в качестве $\mu,\
\varphi=\frac{1}{Q}|_{D(P_0,r)}$ и $q=p/(n-1)>1$.
\end{proof}

\medskip

Следующий результат сначала был  доказан  на гладких римановых много\-образиях $(\mathbb{M}^n,g)$, $n\geq2$,
относительно конформного модуля, см.  лемму 4.1 в \cite{ARS}.

\begin{lemma} \label{lem1} \label{lem1} \ {\it  Пусть  $D$   и  $D_*$  --  области  на
гладких \ римановых\  многообразиях \ $(\mathbb{M}^n,g)$  и
$(\mathbb{M}^n_*,g_*)$, $n\geq2$, $Q:\mathbb{M}^n \rightarrow (0,
\infty)$ -- измеримая функция и $f:D\rightarrow D_*$ -- нижний
$Q$-гомеоморфизм относительно p-модуля в точке $P_0\in \overline{D}$, $p>n-1$.
Тогда
\begin{equation}\label{1}
M_\alpha\left(\Delta(f(S_\varepsilon),f(S_{\varepsilon_0}); D_*)\right)\leq c/I^{s},
\end{equation}
 где $\alpha=\frac{p}{p-n+1}$, $s=\frac{n-1}{p-n+1}$,
 $S_\varepsilon=S(P_0, \varepsilon)$ и $S_{\varepsilon_0}=S(P_0, \varepsilon_0)$,
 $0<\varepsilon<\varepsilon_0$, $B(P_0,\varepsilon_0)$ -- нормальная окрестность точки
 $x_0$,
\begin{equation}\label{1.7}
I=I(P_0,\varepsilon,\varepsilon_0)=
\int\limits_{\varepsilon}^{\varepsilon_0}\frac{dr}{||Q||_{s}(P_0,r)},
\end{equation}
$||Q||_{s}(P_0,r)$ определено в (\ref{11.10}), а константа $c$
произвольно близка к 1 в достаточно малых
окрестностях точки $P_0$.}\end{lemma}

\begin{proof}  Учитывая тот факт, что по замечанию \ref{rem1}
метрический тензор в начале нормальных координат  совпадает с
единичной матрицей и, следо\-вательно, в достаточно малом шаре с
центром в нуле равномерно близок к единичной матрице, получаем,
согласно равенствам Хессе и Циммера, см. \cite{Hes}  и \cite{Zi},
что
\begin{equation}\label{1.6}
M_\alpha\left(\Delta(f(S_{\varepsilon}),f(S_{\varepsilon_0});
D_*))\right)\leq \frac{c}{M_p^{s}(f(\Sigma))},
\end{equation} $\alpha=\frac{p}{p-n+1}, $ $1<\alpha<\infty$, $n-1<p<\infty$, поскольку
$f(\Sigma)\subset \Sigma (f(S_{\varepsilon}),f(S_{\varepsilon_0});
D_*),$ где $\Sigma$ обозначает совокупность всех геодезических сфер
с центром в точке $P_0$, расположенных между сферами $S_{\varepsilon}$ и $S_{\varepsilon_0}$, а
$\Sigma (f(S_{\varepsilon}),f(S_{\varepsilon_0}); D_*)$ состоит из
всех замкнутых множеств в $D_*$, отделяющих $f(S_{\varepsilon})$ и
$f(S_{\varepsilon_0})$, а $c$ -- постоянная, произвольно близкая к
единице в достаточно малых окрестно\-стях $P_0$. Таким образом, из
теоремы \ref{th1} и соотношения (\ref{1.6}) получаем оценку
(\ref{1}), где интеграл $I=I(P_0,\varepsilon,\varepsilon_0)$
определен в (\ref{1.7}).
\end{proof}

\medskip

Аналог приведенной ниже леммы   был  ранее получен в $(\mathbb{M}^n,g)$,
$n\geq2$, относительно конформного модуля, см. лемму 3 в \cite{AS}.

\begin{lemma} \label{lem2} {\it  Пусть $D$  -- область на гладком
римановом многообразии $(\mathbb{M}^n,g)$, $n\geq2$, $P_0 \in \overline{D}$,
$0<\varepsilon<\varepsilon_0<d_0:=\sup\limits_{P\in D}d(P,P_0)$, $\mathbb{A}=
\mathbb{A}(P_0,\varepsilon,\varepsilon_0)$ -- геодезическое кольцо,
$B(P_0,\varepsilon_0)$ -- нормальная окрестность точки $P_0$  и
пусть $Q:\mathbb{M}^n \rightarrow (0, \infty)$ -- измеримая функция,
которая интегрируема в степени $s$, $s=\frac{n-1}{p-n+1}$, где $p>n-1$ в $B(P_0,\varepsilon_0)$. Пусть
$$
\eta_0(t)=\frac{1}{I\cdot\|Q\|_{s}(P_0,t)},
$$где
$\|Q\|_{s}(P_0,r)$, $r \in (\varepsilon,\varepsilon_0),$ и
$I=I(P_0,\varepsilon,\varepsilon_0)$ определены в (\ref{11.10}) и
(\ref{1.7}), соответственно. Тогда
\begin{equation}\label{2.2}
1/I^{s}=\int\limits_{\mathbb{A}\ \cap D}
Q^{s}(P)\cdot\eta_0^\alpha\left(d(P,P_0)\right)\
dV\leq\int\limits_{\mathbb{A}\ \cap D}
Q^{s}(P)\cdot\eta^\alpha\left(d(P,P_0)\right)\ dV,
\end{equation} где $\alpha=\frac{p}{p-n+1}$ для любой борелевой функции
$\eta:(\varepsilon,\varepsilon_0)\to [0,\infty]$, такой, что
\begin{equation}\label{2.3}
 \int\limits_{\varepsilon}^{\varepsilon_0}\eta(r)dr=1.\end{equation}} \end{lemma}

\begin{proof} В дальнейшем мы пользуемся тем
обстоятель\-ством, что по замечанию \ref{rem1}
 элементы объема и площадей на геодезических сферах в
нормальных окрестностях точки $P_0$ экви\-валентны евклидовым с
коэффициентом экви\-валентности произвольно близким к единице  в
достаточно малых окрест\-ностях, а радиусы геодезических сфер
$S(P_0,r)$ совпадают с евклидовыми.

Если $I=\infty$, то левая часть соотношения (\ref{2.2}) равна нулю
 и неравенство в этом случае очевидно. Заметим, что если $I=0$, то
 $\|Q\|_{s}(P_0,r)=\infty$ для п.в. $r\in (\varepsilon,\varepsilon_0),$ что невозможно ввиду интегрируемости $Q^s$ в $B(P_0,\varepsilon_0)$. Поэтому можно считать, что $0<I<\infty$. Тогда
 $\eta_0(r)\neq\infty$ п.в. в
$(\varepsilon,\varepsilon_0)$, поскольку $\|Q\|_{s}(P_0,r)\neq0$ п.в. Кроме  того, $\|Q\|_{s}(P_0,r)\neq\infty$ п.в. поскольку $Q\in L^s(B(P_0,\varepsilon_0)).$ Полагая

 $$\beta(r)=\eta(r)\cdot\|Q\|_{s}(P_0,r)$$ и $$
\omega(r)=[\|Q\|_{s}(P_0,r)]^{-1},
$$ будем иметь, что
$\eta(r)=\beta(r)\omega(r)$ п.в. в
$(\varepsilon,\varepsilon_0)$ и что
$$C:=\int\limits_{\mathbb{A}\ \cap D}
Q^{s}(P)\cdot\eta^\alpha\left(d(P,P_0)\right)\
dV=\int\limits_{\varepsilon}^{\varepsilon_0}\beta^{\alpha}(r)\omega(r)dr.$$

Применяя неравенство Иенсена с весом, см., напр., теорему 2.6.2 в
\cite{Ran}, к выпуклой функции $\varphi(t)=t^{\alpha}$, заданной в
интервале $\Omega=(\varepsilon,\varepsilon_0)$, с вероятностной
мерой
$$\nu(E)=\frac{1}{I}\int\limits_{E}\omega(r)\ dr,$$
получаем что $$\left(\frac{1}{I}\int\limits_{\varepsilon}^{\varepsilon_0}
\beta^{\alpha}(r)\omega(r)dr\right)^{\frac{1}{\alpha}}\geq
\frac{1}{I}\int\limits_{\varepsilon}^{\varepsilon_0} \beta(r)\omega(r)\ dr =\frac{1}{I},$$ где мы
также использовали тот факт, что $\eta(r)={\beta(r)}\omega(r)$
удовлетворяет соотношению (\ref{2.3}). Таким образом, $$C\geq
\frac{1}{I^{s}},$$ что и доказывает (\ref{2.2}).\end{proof}

\begin{corollary} \label{cor1} {\it  При условиях и обозначениях лемм \ref{lem1} и
\ref{lem2},
\begin{equation} \label{1.10}
M_\alpha(\Delta(f(S_\varepsilon),f(S_{\varepsilon_0}); D_*))\leq
c\int\limits_{\mathbb{A}\ \cap D}Q^{s}(P) \ \eta^{\alpha}(d(P,P_0))\ dV,
\end{equation} где $S_\varepsilon=S(P_0,\varepsilon)$ и $S_{\varepsilon_0}=S(P_0,\varepsilon_0).$}
\end{corollary}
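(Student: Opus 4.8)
The plan is to derive \eqref{1.10} by directly composing the two estimates already proved in Lemmas~\ref{lem1} and~\ref{lem2}; the analytic content has been spent there, so the corollary is essentially their transitive combination. I would first observe that the exponents $\alpha=\frac{p}{p-n+1}$ and $s=\frac{n-1}{p-n+1}$, the annulus $\mathbb{A}=\mathbb{A}(P_0,\varepsilon,\varepsilon_0)$, the ball $B(P_0,\varepsilon_0)$, the sphere pair $S_\varepsilon=S(P_0,\varepsilon)$, $S_{\varepsilon_0}=S(P_0,\varepsilon_0)$, and the quantity $I=I(P_0,\varepsilon,\varepsilon_0)$ of \eqref{1.7} all carry identical meaning in both lemmas, so that the two statements may be read under one common set of hypotheses and the constant $c$ is literally the one furnished by Lemma~\ref{lem1}.

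Then the proof reduces to a single chain. By Lemma~\ref{lem1} the modulus is controlled by $c/I^{s}$, while by Lemma~\ref{lem2} the reciprocal $1/I^{s}$ is dominated, for \emph{every} nonnegative measurable $\eta\colon(\varepsilon,\varepsilon_0)\to[0,\infty]$ normalized by \eqref{2.3}, by the integral on the right of \eqref{2.2}. Multiplying the latter bound by the nonnegative constant $c$ and concatenating gives
$$M_\alpha\bigl(\Delta(f(S_\varepsilon),f(S_{\varepsilon_0});D_*)\bigr)\ \le\ \frac{c}{I^{s}}\ \le\ c\int\limits_{\mathbb{A}\cap D}Q^{s}(P)\,\eta^{\alpha}(d(P,P_0))\,dV,$$
which is exactly \eqref{1.10}, valid for an arbitrary admissible $\eta$.

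The only places a reader might look for difficulty are bookkeeping rather than genuine obstacles. I would check that the integrability assumption $Q\in L^{s}(B(P_0,\varepsilon_0))$ underlying Lemma~\ref{lem2} is compatible with the hypotheses imported from Lemma~\ref{lem1}, and that the degenerate cases do not break the concatenation: if $I=\infty$ then $c/I^{s}=0$ and \eqref{1.10} holds trivially, while if $I=0$ the right-hand integral is already $+\infty$ and again there is nothing to prove. Since no fresh estimate is generated --- the corollary is the transitive closure of \eqref{1} with the right inequality in \eqref{2.2} --- I expect the actual write-up to occupy only a line or two.
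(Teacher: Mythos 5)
Your proof is correct and coincides with the paper's (implicit) argument: the corollary is stated under the combined hypotheses of Lemmas~\ref{lem1} and~\ref{lem2} precisely because \eqref{1.10} is the immediate concatenation of \eqref{1} with the right-hand inequality in \eqref{2.2}, and the paper accordingly gives no separate proof. Your bookkeeping of the degenerate cases $I=0$ and $I=\infty$ is consistent with how they are already disposed of inside the proof of Lemma~\ref{lem2}, so nothing further is needed.
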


Другими словами, это означает, что нижний $Q$-гомеоморфизм  отно\-сительно $p$-мо\-ду\-ля в $\mathbb{R}^n, \ n\geq2,$ с $Q\in L^s(B(P_0,\varepsilon_0))$, $s=\frac{n-1}{p-n+1}$,  является $Q_*$-кольцевым гомеоморфизмом относительно $\alpha$-модуля с $Q_*=Q^s$, $\alpha=\frac{p}{p-n+1}.$ Ясно, что $\alpha>p$ при $n\geq2$, \cite{GS}.

Отметим, что теория нижних $Q$-отображений применима к отображениям с конечным искажением класса Орлича-Cоболева $W^{1,\varphi}_{loc}$ при наличии условия Кальдерона и, в частности, к классам Соболева $W^{1,p}_{loc}$ при $p > n-1$ (см. \cite{ARS},\cite{KRSS3}--\cite{KRSS1}). В работах \cite{KPRS}, \cite{Sal}  также  приводятся приложения нижних $Q$-гомеоморфизмов к исследованию локального и граничного поведения гомеоморфных решений с обобщенными производными и к задаче Дирихле для уравнений Бельтрами с вырождением.


\section {Об обобщенных квазиизометриях} Говорим, что  отображение $f:\mathbb{M}^n\to
\mathbb{M}^{n}_*$, $n\geq2$, называется {\it липшицевым}, если для
некоторого $L<\infty$ и для всех $P,\ T$ из $(\mathbb{M}^n,g)$,
выполнено неравен\-ство
$$d_*(f(P),f(T))\leq L\,d(P,T),$$
где $d$ и $d_*$ -- геодезические расстояния на $(\mathbb{M}^n,g)$ и
$(\mathbb{M}^{n}_*,g^{*})$, соответственно. Наименьшая \ из \ таких\
констант\  называется \ {\it константой\  Липшица}\  и обозначается
$Lip(f)$. Одним из примеров липшицевой функции в $\mathbb{R}^n$ может служить
функция $f(x)=dist(x,F)$, где $F$ -- замкнутое подмножество $\mathbb{R}^n$,
причем, $Lip(f)=1$.

Существует также и более узкий класс отображений, чем липшицевы, а именно, билипшицевы отображения.

Говорим также, что отображение $f:\mathbb{M}^n\to
\mathbb{M}^{n}_*$, $n\geq2$,  {\it билипшицево}, если оно, во-первых, липшицево, а во-вторых,
$$L^{*}\,d(P,T)\leq d_*(f(P),f(T))$$ для
некоторого $L^{*}>0$ и для всех $P$ и $T$ из $(\mathbb{M}^n,g)$.

Пусть далее  $\Omega$ -- открытые множества на $(\mathbb{M}^n,g)$,
$n\geq2$,  $f:\Omega\to \mathbb{M}^{n}_*$ -- непрерывное
отображение.
Аналогично \cite{KPR}, см. также  \cite{MRSY} говорим, что отображение
$f:\Omega\to \mathbb{M}^{n}_*$  {\it конечно липшицево},
если
$L(P,f)<\infty$ для всех $P\in\Omega$, и {\it конечно
билипшицево}, если
$$0<l(P,f)\leq L(P,f)<\infty$$
для всех $P\in\Omega$, где
\begin{equation}  \label{13} L(P,f)=\limsup_{T\to
P}\frac{d_*(f(P),f(T))}{d(P,T)},\end{equation} $T\in \mathbb{M}^n,$ и
$$l(P,f)=\liminf_{T\to
P}\frac{d_*(f(P),f(T))}{d(P,T)}.$$

Очевидно, что каждое липшицево отображение является конеч\-но
липши\-цевым и, соответственно, каждое билипшицево отображение
является конеч\-но билипшицевым.

Обозначим далее через $K_p(P,f)$ {\it внешнюю дилатацию}  на
$(\mathbb{M}^n,g)$, $n\geq2$, определяемую следующим образом:
\begin{equation}  \label{12} K_p(P,f)= \left \{\begin{array}{ll}
\frac{L^p(P,f)}{J(P,f)} \qquad &{\text {при}}  \ J(P,f)\ne 0,
\\ 1  & {\text {при}}  \ L(P,f)=0,
\\ \infty  & {\text {в остальных точках}}\end{array}
\right., \end{equation} где \begin{equation} \label{11}
J(P,f)=\lim\limits_{r\rightarrow 0} \frac{V_{*}(f(B(P,r)))}{V(B(P,r))} \ \ \ \ \text{п.в.}
 \end{equation}

Напомним, что при $p=n$ получаем внешнюю дилатацию $K(P,f)$ отобра\-же\-ния $f$, определенную стандартным образом, см., напр., п. 3 в \cite{ARS}. Напомним также, что    $\Vert
f^{\,\prime}(x)\Vert$ в $\mathbb{R}^{n}$ обозначает матричную норму якобиевой матрицы
$f^{\,\prime}$ отображения $f$ в точке $x\in D$,
$\Vert f^{\,\prime}(x)\Vert=\sup\limits_{h\in{\Bbb
R}^n,|h|=1}|f^{\,\prime}(x)\cdot h|\,,$ $J_f(x)={\rm
det}\ f^{\,\prime}(x)$ -- якобиан отображения $f$ и $K_{f}(x)=\Vert
f^{\,\prime}(x)\Vert^n/J_f(x)$ -- внешнюю дилатацию отображения $f$ в $\mathbb{R}^{n}$.

\begin{remark} \label{rem2} Переходя к локальным координатам, по замечанию
1 видим, что определения    $L(P,f)$ из (\ref{13}) и $\|f'(x)\|$  в $\mathbb{R}^{n}$, внешней дилатации
 $K_p(P,f)$ из (\ref{12}) и $K_f(x)$  в $\mathbb{R}^{n}$, а также обобщенного якобиана $J(P,f)$ из (\ref{11}) и $J_f(x)$ из $\mathbb{R}^{n}$, соответственно,  согласованы  в
точках дифференци\-руемости отображения $f$. Заметим также, что
величина $K_f(x)$ инвариантна относительно замен локальных координат. Таким образом, как видно из нормальных координат, $K_p(P,f)$ можно вычислять п.в. через
$K_f(x)$ и
в любых локальных координатах для указанных отображений.\end{remark}

Следующее утверждение является ключевым для дальнейшего иссле\-дования.

Впервые аналогичный результат был получен в $\mathbb{R}^{n}$, $n\geq 2$, см. следствие 5.15 в \cite{KPR},
см. также следствие 10.10 в \cite{MRSY}.

\begin{theorem} \label{2} {\it   Пусть $(\mathbb{M}^n,g)$ и
$(\mathbb{M}^{n}_*,g^{*})$, $n\geq 2$, -- гладкие римановы
много\-образия,  $\Omega$ -- открытое множество из $(\mathbb{M}^n,g)$.
Тогда любой конечно билипшицевый гомеоморфизм $f:\Omega\to
\mathbb{M}^{n}_*$ является нижним $Q$-гомео\-морфизмом относительно $p$-модуля, $p\in(0,\infty)$, с $Q(x)=K_p(P,f)$.}\end{theorem}

\begin{proof}   Более того, покажем, что
$$
M_p(f\Gamma)\ \geq\ \inf\limits_{\varrho\in ext_p\,
adm\Gamma}\int\limits_{\Omega}\frac{\varrho^p(P)}{K_p(P,f)}\ dV
$$ для любого семейства $\Gamma$ $(n-1)$-мерных
поверхностей $S$ в $\Omega$.

Пусть далее   $B$ -- (борелевское) множество всех точек
$P$ из $\Omega$, где, согласно замечаниям \ref{rem1} и \ref{rem2},  $f$
имеет  дифференциал $f'(P)$ и $J(P,f)\neq 0.$ Известно, что
$B$ является объединением  счетного набора борелевских множеств
$B_l$, $l=1,2,\ldots,$ таких, что $f_l=f|_{B_l}$ билипшицево, см.,
напр., пункт 3.2.2 в \cite{Fe}. Не ограничивая общности, можно считать, что  $B_l$
попарно не пересекаются. Отметим, что $B_0=\Omega\setminus B$ и
$f(B_0)$ имеет нулевую меру в $\mathbb{M}^n$ и
$\mathbb{M}_*^n$, соответственно, ввиду билипшицевости отображения $f$, см.
следствие 8.1 в \cite{MRSY} и замечание \ref{rem1}. Таким образом, по теореме 2.4 в \cite{KPR}, см. теорему 9.1 в \cite{MRSY}, $A_S(B_0)=0$ для $p$-п.в. $S\in\Gamma$ и,
т.к. $f$  -- конечно билипшицевый гомеоморфизм, $A_{S_*}(f(B_0))=0$
для $p$-п.в. $S\in\Gamma,$ где $S_*=f\circ
S$.

Пусть $\varrho_*\in adm\,f\Gamma$, $\varrho_*\equiv0$ вне
$f(\Omega)$, и пусть $\varrho\equiv0$ вне $\Omega$ и
$$\varrho(P)\ =\ \varrho_*(f(P))\,L(P,f)$$ для п.в. $P\in \Omega.$

Рассуждая на каждом $B_l$, по 3.2.20 и 1.7.6 в \cite{Fe} получаем, что
$$\int\limits_{S}\varrho^{n-1}\, d\mathcal{A}\ \geq\
\int\limits_{S_*}\varrho^{n-1}_*\,d\mathcal{A}_*\ \geq\ 1$$ для
$p$-п.в. $S\in\Gamma$ и, таким образом, $\varrho\in
ext_p\,adm\,\Gamma$.

С помощью замены переменных для класса конечно билипшицевых функций,
см., напр., пункт 3.2.5 в \cite{Fe}, и теоремы Лебега получаем, что
$$\sum\limits_{l}\int\limits_{B_l}\frac{\varrho^p(P)}{K_p(P,f)}\ dV\ =
\int\limits_{\Omega}\frac{\varrho^p(P)}{K_p(P,f)}\ dV\ =\
\int\limits_{f(\Omega)}\varrho^p_*(T)\ dV_*,$$ что и приводит к
нужному неравенству. \end{proof}


\section {О граничном поведении конечно билипшицевых го\-мео\-морфизмов} Далее, учитывая теоремы о граничном поведении нижних
$Q$-го\-ме\-о\-мор\-физмов из п. 6 статьи \cite{ARS}, в качестве следствий получаем
ряд теорем о граничном поведении конечно билипшицевых гомеоморфизмов
на гладких римановых многообразиях.

Аналогично \cite{MRSY} говорим, что граница  области $D$ -- {\it слабо плоская в
точке} $P_0\in
\partial D$, если для любого числа $P>0$ и любой окрестности $U$ точки
$P_0$ найдется ее окрестность $V\subset U,$ такая, что
$$M(\Delta(E,F; D))\geq P $$
для любых континуумов $E$ и $F$ в $D$, пересекающих $\partial U$ и
$\partial V.$

Также говорим, что граница области  $D$  {\it сильно
достижима в точке $P_0\in \partial D$}, если для любой окрестности
$U$ точки $P_0$, найдется компакт $E\subset D$, окрестность
$V\subset U$ точки $P_0$ и число $\delta
>0,$ такие, что
$$ M(\Delta(E,F; D))\geq \delta$$
для любого континуума  $F$ в $D$, пересекающего $\partial U$ и
$\partial V.$

Наконец говорим, что граница области $D$  называется {\it сильно достижи\-мой} и {\it слабо
плоской}, если со\-от\-вет\-ствующие свойства имеют место в каждой
точке границы.

Напомним также, что топологическое пространство {\it связно}, если его
нельзя разбить на два непустых открытых множества.  Область $D$
называ\-ется {\it локально связной в точке} $P_0\in\partial D,$ если
для любой окрестности $U$ точки $P_0$ найдется окрестность
$V\subseteq U$ точки $P_0$, такая, что $V\cap D$ связно, ср. \cite[c.
232]{Ku}.

По теореме 6.1 в \cite{ARS} из теоремы 2 получаем следующее заключение.
\begin{theorem} \label{3} {\it Пусть  $D$ локально связна на границе,
$\overline{D}$  компактно, $\partial D_*$ -- слабо плоская. Если
$f:D\to D_*$ -- конечно билипшицевый гомеоморфизм  с $K(P,f)\in
L^{n-1}(D)$, то $f^{-1}$ имеет непрерывное продолжение на
$\overline{D_*}$.}\end{theorem}

\begin{remark}\label{rem3}   Отметим, что здесь условие $K(P,f)\in
L^{n-1}(D)$ нельзя заменить на условие $K(P,f)\in L^{p}(D)$ ни при
каком $p<n-1$, см.,  примеры липшицевых отображений в
доказательстве теоремы 5 в \cite{KovOn}. Однако, здесь достаточно
предполагать, что $K(P,f)\in L^{n-1}(D\cap U)$ для некоторой
окрестности $U$ границы $D$. \end{remark}

По теореме 9.2 в \cite{KR1} из теоремы 2 также имеем следующий результат.
\begin{theorem} \label{4}{\it Пусть  $D$ локально связна на границе,
$\overline{D}$  компактно, $\partial D_*$ -- слабо плоская и
\begin{equation} \label{11.15.5}
\int\limits_{0}^{\delta({x_0})} \frac{dr}
{\|K\|_{n-1}(P_0,r)}=\infty\ \ \ \ \ \ \ \forall\ P_0  \in
\partial D
\end{equation}
для некоторого   $\delta(P_0)\in (0,d(P_0)),$ где $d(P_0):=\sup
\limits_{P \in D}d(P,P_0),$ такого, что
 $B(P_0,\delta(P_0))$ --  нормальная окрестность точки $P_0$ и
$$ \|K\|_{n-1}(P_0,r)=
 \left(\int\limits_{S(P_0,r)}K^{n-1}(P,f)\
d{\cal A}\right)^{\frac{1}{n-1}}.$$Тогда, для любого
конечно билипшицевого гомеоморфизма  $f:D\to D_*$, его обратное
отображение $f^{-1}$ допускает непрерывное продолжение на
$\overline{D_*}$.}\end{theorem}

При этом мы также воспользовались нормальными окрестностями, пред\-ложением 1 и замечанием 1.

Аналогично по лемме 6.1 в \cite{KR1} и теореме 2 имеем:
\begin{lemma} \label{lem3} {\it Пусть\  $D$ \ локально \ связна\  в $P_0\in\partial D$,
$\partial D_*$ сильно достижима хотя бы в одной точке предельного
множества \ \ $C(P_0,f)=\{T\in
\mathbb{M}^{n}_*: T=\lim\limits_{k\to\infty} f(P_k), \ \ P_k\to
P_{0},\ P_k\in D\}$ и $\overline{D_*}$ компактно,
$K(P,f):\mathbb{M}^n \to(0,\infty)$ -- измеримая функция и пусть
$f:D\to D_*$ -- конечно билипшицевый гомеоморфизм  в точке $P_0$.
Если условие (\ref{11.15.5}) выполнено в точке $P_0$, то
$f$ продолжим в $P_0$ по непрерывности.}\end{lemma}

\begin{corollary} \label{cor2} {\it Пусть $D$  локально связна в точке $P_0 \in
\partial D$,
$\partial D_*$ сильно достижима, $\overline{D_*}$ компактно
и пусть  $f:D\rightarrow D_*$ -- конечно билипшицевый гомео\-мор\-физм с
$$K(P,f) = O\left (\log\frac{1}{r}\right)\ \ \ \text{при}\ \  r:=d(P,P_0)\rightarrow0.$$
 Тогда $f$
допускает продолжение в точку $P_0$ по непрерывности на
$(\mathbb{M}^{n}_*,g^{*})$.}
\end{corollary}

Наконец, на основе теоремы \ref{4} и леммы \ref{lem3}, приходим
к следующему заключению.

\begin{theorem} \label{5} {\it Пусть $D$  локально связна на границе,
$\overline{D}$ и $\overline{D_*}$ компактны, $\partial D_*$ -- слабо
плоская.
Тогда любой конечно билипшицевый гомеоморфизм  $f:D\rightarrow D_*$ с условием (\ref{11.15.5})
допускает гомеоморфное продолжение}
 $\overline{f}:\overline{D}\rightarrow \overline{D_*}$.\end{theorem}

\begin{corollary} \label{cor3} {\it Пусть $D$  локально связна на границе,
$\partial D_*$ -- слабо плоская,
 $\overline{D}$ и $\overline{D_*}$ компактны  и пусть $f:D\rightarrow D_*$ -- конечно билипшицевый гомео\-морфизм с
$$ K(P,f) = O\left (\log\frac{1}{r}\right)\ \ \ \ \ \ \ \ \ \text{при}\ \  r:=d(P,P_0)\rightarrow 0\ \
\ \forall \ P_0 \in \partial D.$$
 Тогда   $f$
допускает гомеоморфное продолжение}
$\overline{f}:\overline{D}\rightarrow
\overline{D_*}$.\end{corollary}


\medskip

\textbf{Афанасьева Елена Сергеевна}

Институт прикладной математики и механики НАН Украины

ул. Розы  Люксембург 74, Донецк, 83114.

Рабочий телефон: 311-01-45

\textbf{E-mail:} es.afanasjeva@yandex.ru, smolovayaes@yandex.ru

\end{document}